\font\smallit=cmti10
\renewcommand\section{\@startsection {section}{1}{\z@}
{-30pt \@plus -1ex \@minus -.2ex}
{2.3ex \@plus.2ex}
{\normalfont\normalsize\bfseries\boldmath}}
\renewcommand\subsection{\@startsection{subsection}{2}{\z@}
{-3.25ex\@plus -1ex \@minus -.2ex}
{1.5ex \@plus .2ex}
{\normalfont\normalsize\bfseries\boldmath}}
\renewcommand{\@seccntformat}[1]{\csname the#1\endcsname. }
\newtheorem{theorem}{Theorem}
\newtheorem{lemma}{Lemma}
\newtheorem{corollary}{Corollary}
\begin{document}

\begin{center}
{\Large \bf On $n^{\rm th}$ order Euler polynomials}
\vskip 5pt
{\Large \bf of degree $n$ that are Eisenstein}
\vskip 20pt
{\bf Michael Filaseta}\\
{\smallit Dept.~Mathematics, 
University of South Carolina, 
Columbia, SC 29208, USA}\\
{\tt filaseta@math.sc.edu}\\ 
\vskip 20pt
{\bf Thomas Luckner}\\
{\smallit Dept.~Mathematics, 
University of South Carolina, 
Columbia, SC 29208, USA}\\
{\tt luckner@email.sc.edu}\\ 
\end{center}

\vskip 5pt
\centerline{\phantom{\smallit Received: , Revised: , Accepted: , Published: }} 

\vskip 5pt

\centerline{\bf Abstract}
\vskip 5pt\noindent
For $m$ an even positive integer and $p$ a prime, we show that the generalized Euler polynomial $E_{mp}^{(mp)}(x)$ is in Eisenstein form with respect to $p$ if and only if 
$p$ does not divide $m (2^m-1)B_m$.  As a consequence, 
we deduce that at least $1/3$ of the generalized Euler polynomials $E_n^{(n)}(x)$ are in Eisenstein form with respect to a prime $p$ dividing $n$ and, hence, irreducible over $\mathbb Q$. 

\pagestyle{myheadings} 
\thispagestyle{empty} 
\baselineskip=12.875pt 
\vskip 30pt
\section{Introduction}

For $m$ a positive integer, the $m$th order Bernoulli polynomial of degree $n$, denoted $B_{n}^{(m)}(x)$, and  the $m$th order Euler polynomial of degree $n$, denoted $E_{n}^{(m)}(x)$, are defined by 
\begin{equation*}
\left(\dfrac{t}{e^t-1}\right)^m e^{tx}=\sum_{n=0}^{\infty}B_n^{(m)}(x)\dfrac{t^n}{n!}
\end{equation*}
and
\begin{equation}\label{eulerdef}
\left(\dfrac{2}{e^t+1}\right)^m e^{tx}=\sum_{n=0}^{\infty}E_n^{(m)}(x)\dfrac{t^n}{n!},
\end{equation}
respectively. We will also want to make use of the Bernoulli number $B_n=B_n^{(1)}(0)$.
Let $\nu_{p}$ denote the usual $p$-adic valuation so that, in particular, for non-zero integers $a$ and $b$, we have $\nu_{p}(a) = k \in \mathbb Z^{+} \cup \{ 0 \}$ means that $p^{k} \mid a$ and $p^{k+1} \nmid a$ and $\nu_{p}(a/b) = \nu_{p}(a) - \nu_{p}(b)$.   
A polynomial $f(x) \in \mathbb Q[x]$ is said to be \textit{Eisenstein} if there is an integer $a$ and a prime $p$ for which the well-known Eisenstein criterion applies to $f(x+a) = \sum_{j=0}^{n} a_{j} x^{j}$ so that $\nu_{p}(a_{n}) = 0$, $\nu_{p}(a_{j}) \ge 1$ for $j \in \{ 0, 1, \ldots, n-1 \}$, and $\nu_{p}(a_{0}) = 1$.  
Eisenstein's criterion implies that Eisenstein polynomials are irreducible over $\mathbb Q$ if $n \ge 1$.  
In the special case that $a = 0$, we say that \textit{$f(x)$ is in Eisenstein form} or, if $p$ is fixed, that \textit{$f(x)$ is in Eisenstein form with respect to $p$}.  
A.~Adelberg and the first author \cite{adelfil} showed, somewhat surprisingly, that Eisenstein's criterion applies to many of the $n$th order Bernoulli polynomials of degree $n$.  
More precisely, they showed that
\begin{equation}\label{adelfilresult}
\liminf_{t \rightarrow \infty} \dfrac{|\{ n \le t: B_{n}^{(n)}(x) \text{ is in Eisenstein form} \}|}{t} > \dfrac{1}{5}.
\end{equation}
Experimentally, the authors noticed that the Euler polynomials $E_{n}^{(n)}(x)$ often also appear to be in Eisenstein form.  The polynomials $E_{n}^{(n)}(x)$ have been investigated less in the literature, so we are not as readily able to apply known results to derive such a result.  
However, as we will see, we are still able to establish the following results.

\begin{theorem}\label{eisensteinpolynsthm}
Let $m$ be an even positive integer and $p$ be an odd prime. 
Then $E_{mp}^{(mp)}(x)$ is in Eisenstein form with respect to $p$ if and only if 
$p$ does not divide $m (2^m-1)B_m$.
\end{theorem}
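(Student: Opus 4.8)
The plan is to read off the coefficients of $E_{mp}^{(mp)}(x)$ and reduce the three Eisenstein conditions to statements about the $p$-adic valuations of the generalized Euler numbers $E_k^{(mp)}(0)$. Multiplying $\big(2/(e^t+1)\big)^{mp}=\sum_k E_k^{(mp)}(0)\,t^k/k!$ by $e^{tx}$ gives
\[
E_{mp}^{(mp)}(x)=\sum_{j=0}^{mp}\binom{mp}{j}E_{mp-j}^{(mp)}(0)\,x^j .
\]
The leading coefficient is $E_0^{(mp)}(0)=1$, so its valuation is $0$ automatically, and the constant term is $a_0=E_{mp}^{(mp)}(0)$. For a middle coefficient $a_j=\binom{mp}{j}E_{mp-j}^{(mp)}(0)$ with $p\nmid j$, Kummer's theorem gives $\nu_p\binom{mp}{j}\ge1$, so such $a_j$ are fine once the Euler numbers are shown $p$-integral; the only remaining middle coefficients are those with $j=\ell p$, $1\le\ell\le m-1$, for which I will need $\nu_p\big(E_{(m-\ell)p}^{(mp)}(0)\big)\ge1$. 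Thus everything reduces to understanding $\nu_p\big(E_{kp}^{(mp)}(0)\big)$ for $0\le k\le m$.

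Next I would obtain an exact closed form by expanding in $e^t-1$: since $2/(e^t+1)=\big(1+(e^t-1)/2\big)^{-1}$, one gets
\[
E_n^{(mp)}(0)=\sum_{i\ge0}(-1)^i2^{-i}\,\frac{(mp+i-1)!}{(mp-1)!}\,S(n,i),
\]
where $S(n,i)$ is a Stirling number of the second kind, arising from $(e^t-1)^i=\sum_n i!\,S(n,i)\,t^n/n!$. Because $p$ is odd, $2^{-i}$ is a $p$-unit and every summand is a $p$-integer, so all $E_k^{(mp)}(0)$ are $p$-integral. Moreover, for $n=kp$ with $k\ge1$ the $i=0$ term vanishes, and for $i\ge1$ the rising factorial $(mp+i-1)!/(mp-1)!=mp(mp+1)\cdots(mp+i-1)$ carries the explicit factor $mp$; hence $\nu_p\big(E_{kp}^{(mp)}(0)\big)\ge1$ for every $k\ge1$. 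This simultaneously settles all middle coefficients and shows $\nu_p(a_0)\ge1$. Observe also that if $p\mid m$ then every term has $\nu_p\ge2$, so $\nu_p(a_0)\ge2$ and $E_{mp}^{(mp)}(x)$ is not in Eisenstein form, consistent with the theorem since $p\mid m$ forces $p\mid m(2^m-1)B_m$.

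It remains, assuming $p\nmid m$, to decide exactly when $\nu_p(a_0)=1$, i.e.\ to compute $\tfrac1p E_{mp}^{(mp)}(0)\bmod p$. In the formula above only $1\le i\le p$ contribute modulo $p^2$ (for $i>p$ the factor $(m+1)p$ forces $\nu_p\ge2$), and for $1\le i\le p$ one has $\tfrac1p\,mp(mp+1)\cdots(mp+i-1)\equiv m\,(i-1)!\pmod p$. Since also $(i-1)!\equiv0\pmod p$ for $i>p$, this yields
\[
\tfrac1p E_{mp}^{(mp)}(0)\equiv m\sum_{i\ge1}(-1)^i2^{-i}(i-1)!\,S(mp,i)=m\,U_{mp}\pmod p,
\]
where I set $U_n:=n!\,[t^n]\,\log\!\big(2/(e^t+1)\big)$. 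Differentiating $\log\big(2/(e^t+1)\big)$ shows its derivative equals $-\tfrac12+\tfrac12\sum_{n\ge1}E_n^{(1)}(0)\,t^n/n!$, whence $U_n=\tfrac12 E_{n-1}^{(1)}(0)=(1-2^n)B_n/n$ using the classical evaluation $E_k^{(1)}(0)=2(1-2^{k+1})B_{k+1}/(k+1)$; in particular $m\,U_{mp}=(1-2^{mp})B_{mp}/p$.

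Finally I would reduce this quantity modulo $p$ to the order-$m$ data. When $(p-1)\nmid m$, both $B_{mp}$ and $B_m$ are $p$-integral by von Staudt--Clausen and $mp\equiv m\pmod{p-1}$, so Kummer's congruence gives $B_{mp}/(mp)\equiv B_m/m$ and $2^{mp}\equiv2^m$, hence $\tfrac1p E_{mp}^{(mp)}(0)\equiv(1-2^m)B_m\pmod p$, which is nonzero precisely when $p\nmid(2^m-1)B_m$. When $(p-1)\mid m$, $B_m$ has $\nu_p=-1$ and $p\mid 2^m-1$; here I would apply the lifting-the-exponent lemma to $2^{mp}-1=(2^m)^p-1$ to get $\nu_p(2^{mp}-1)=\nu_p(2^m-1)+1$ and deduce $\nu_p(m\,U_{mp})=\nu_p(2^m-1)-1$, so that $\nu_p(a_0)=1$ iff $\nu_p(2^m-1)=1$ iff $\nu_p\big((2^m-1)B_m\big)=0$. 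In both cases, and recalling $p\nmid m$, this is exactly the condition $p\nmid m(2^m-1)B_m$, completing the proof. The main obstacle is this last step: when $(p-1)\mid m$ the number $B_m$ has a genuine pole at $p$, so von Staudt--Clausen together with a careful lifting-the-exponent count must replace the naive Kummer congruence, and one must verify that the competing valuations conspire to reproduce the clean divisibility criterion.
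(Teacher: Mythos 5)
Your proposal is correct, and while it converges to the same endgame as the paper, the two key technical steps are handled by genuinely different means. For the middle coefficients, the paper expands $\big(\sum_j \ell_j t^j/(j!\,2^j)\big)^{mp}$ by the multinomial theorem and invokes Kummer's carry criterion for multinomial coefficients with numerator $(mp)!$; you instead write $E_{mp}^{(mp)}(x)=\sum_j\binom{mp}{j}E_{mp-j}^{(mp)}(0)x^j$ and prove directly, via the Newton series $\big(1+(e^t-1)/2\big)^{-mp}$ and Stirling numbers of the second kind, that $E_{kp}^{(mp)}(0)$ is $p$-integral with $\nu_p\ge 1$ for $k\ge1$ (the explicit factor $mp$ in the rising factorial). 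For the constant term, the paper leans on the Liu--Zhang theorem expressing $\overline{E}_{2n}^{(k)}$ as a polynomial in $k$ with linear coefficient $\rho(n,1)=-2^{2n-1}(2^{2n}-1)B_{2n}/n$; your logarithmic-derivative computation of $U_n=n!\,[t^n]\log\big(2/(e^t+1)\big)=(1-2^n)B_n/n$ rederives the same mod-$p^2$ congruence $E_{mp}^{(mp)}(0)\equiv(1-2^{mp})B_{mp}\pmod{p^2}$ from scratch, so your argument is self-contained where the paper cites an external result. From that point on the two proofs coincide: Kummer's congruence when $(p-1)\nmid m$, and von Staudt--Clausen together with the valuation $\nu_p(2^{mp}-1)=\nu_p(2^m-1)+1$ (your lifting-the-exponent step is exactly the paper's induction on the order of $2$ modulo $p^{r+j}$) when $(p-1)\mid m$. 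The trade-off is that the paper's route is shorter by outsourcing the hard identity to Liu--Zhang, while yours is elementary and makes the source of the factor $(2^{mp}-1)B_{mp}$ transparent; one small point worth spelling out in your write-up is that the consistency check ``$p\mid m$ forces $p\mid m(2^m-1)B_m$'' needs von Staudt--Clausen in the subcase $(p-1)\mid m$, where $\nu_p(B_m)=-1$ is offset by $\nu_p(m)\ge1$ and $\nu_p(2^m-1)\ge1$.
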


To clarify, since $m$ is even, the number $m (2^m-1)B_m$ appearing in Theorem~\ref{eisensteinpolynsthm} is an integer (cf.~\cite[Lemma~1]{adelfil}).

\begin{theorem}\label{positivepropthm}
Asymptotically, more than one-third of the polynomials $E_{n}^{(n)}(x)$ are irreducible (and in fact Eisenstein).  More precisely, 
\[
\liminf_{t \rightarrow \infty} \dfrac{|\{ n \le t: E_{n}^{(n)}(x) \text{ is in Eisenstein form} \}|}{t} \ge \dfrac{\log 2}{2} = 0.34657\ldots.
\]
\end{theorem}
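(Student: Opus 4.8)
The plan is to use Theorem~\ref{eisensteinpolynsthm} to manufacture a positive-density set of Eisenstein polynomials among the $E_n^{(n)}(x)$. Whenever $n = mp$ with $m$ even, $p$ an odd prime, and $p \nmid m(2^m-1)B_m$, Theorem~\ref{eisensteinpolynsthm} guarantees that $E_n^{(n)}(x)$ is in Eisenstein form. So I would count integers $n \le t$ admitting such a factorization. To avoid counting a given $n$ from two different factorizations, I would insist that $p > \sqrt t$: if $n \le t$ had two representations $n = m_1 p_1 = m_2 p_2$ with $p_1 \ne p_2$ both exceeding $\sqrt t$, then $p_1 p_2 \mid n \le t$ while $p_1 p_2 > t$, a contradiction. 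Hence each $n$ is produced by at most one pair $(m,p)$ with $p > \sqrt t$, and for $t$ large the inequality $p > \sqrt t$ forces $p$ to be odd and forces $m = n/p \le t/p < \sqrt t < p$. Consequently
\[
|\{ n \le t : E_n^{(n)}(x) \text{ is in Eisenstein form} \}| \;\ge\; \sum_{\substack{m \text{ even} \\ 2 \le m < \sqrt t}} \#\{ p : \sqrt t < p \le t/m,\ p \nmid m(2^m-1)B_m \}.
\]

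Dropping the divisibility condition momentarily, I would first evaluate the main term
\[
S := \sum_{\substack{m \text{ even} \\ 2 \le m < \sqrt t}} \big(\pi(t/m) - \pi(\sqrt t)\big) = \sum_{\sqrt t < p \le t/2} \left\lfloor \frac{t}{2p} \right\rfloor,
\]
the second equality coming from interchanging the order of summation, since for a fixed prime $p > \sqrt t$ the even $m \ge 2$ with $mp \le t$ number exactly $\lfloor t/(2p)\rfloor$. Writing $\lfloor t/(2p)\rfloor = t/(2p) + O(1)$ and noting that the $O(1)$ terms sum to $O(t/\log t) = o(t)$, Mertens' theorem gives
\[
S = \frac{t}{2} \sum_{\sqrt t < p \le t/2} \frac1p + o(t) = \frac{t}{2}\big(\log\log(t/2) - \log\log\sqrt t + o(1)\big) + o(t) = \frac{\log 2}{2}\, t + o(t),
\]
because $\log\log(t/2) - \log\log\sqrt t \to \log 2$. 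Thus the main term already delivers the desired constant $\tfrac{\log 2}{2}$.

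It remains to show that reinstating the condition $p \nmid m(2^m-1)B_m$ discards only $o(t)$ pairs; this is the crux of the argument. For a fixed even $m$, set $N_m = m(2^m-1)\bigl|\mathrm{num}(B_m)\bigr|$, a positive integer with $\log N_m = m\log m + O(m)$ by Stirling's formula applied to $|B_m| \asymp m!/(2\pi)^m$ together with the von Staudt--Clausen bound $\mathrm{denom}(B_m) = \prod_{(q-1)\mid m} q = e^{O(m)}$. Any prime dividing $m(2^m-1)B_m$ divides $N_m$, and distinct prime divisors of $N_m$ exceeding $\sqrt t$ have product at most $N_m$, so the number of discarded primes in $(\sqrt t, t/m]$ is at most $\min\!\big(\tfrac{\log N_m}{\log\sqrt t},\ \pi(t/m)\big)$. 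The first bound is $O\!\big(\tfrac{m\log m}{\log t}\big)$ and the second is $O\!\big(\tfrac{t}{m\log t}\big)$; the two become comparable near $m_0 = \sqrt{t/\log t}$. Summing the first bound over even $m \le m_0$ gives $O\!\big(\tfrac{m_0^2 \log m_0}{\log t}\big) = O\!\big(\tfrac{t}{\log t}\big)$, and summing the second over even $m \in (m_0, \sqrt t)$ gives $O\!\big(\tfrac{t}{\log t}\log(\sqrt t/m_0)\big) = O\!\big(\tfrac{t \log\log t}{\log t}\big)$; both are $o(t)$. Hence the total number of discarded pairs is $o(t)$, so the displayed count is at least $S - o(t) = \tfrac{\log 2}{2}t + o(t)$, which yields the claimed lower bound on the liminf.

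The main obstacle is precisely this last estimate. Because $|B_m|$ grows super-exponentially, $\log N_m$ carries an extra factor of $\log m$, and applying either the size bound or the prime-counting bound alone across the whole range $m \le \sqrt t$ produces an error of order $t$, i.e.\ of the same magnitude as the main term itself. Splitting the range at $m_0 = \sqrt{t/\log t}$ and using the size bound for small $m$ (where few large primes can divide the relatively small $N_m$) and the prime-counting bound for large $m$ (where the interval $(\sqrt t, t/m]$ is short) is exactly what reduces the error to $o(t)$.
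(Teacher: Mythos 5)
Your proposal is correct and follows essentially the same route as the paper: apply Theorem~\ref{eisensteinpolynsthm} to $n=mp$ where $p$ is the unique prime factor of $n$ exceeding $\sqrt{n}$ (the paper enforces uniqueness via $p>m$, you via $p>\sqrt t$), and control the exceptional primes dividing $m(2^m-1)B_m$ through the size bound $\log N_m \ll m\log m$, which is exactly the content of the paper's Lemma~\ref{thm2lemma1}. The only real differences are bookkeeping: the paper truncates at $m\le t^{1/2-\varepsilon}$ and lets $\varepsilon\to 0$, whereas you run $m$ up to $\sqrt t$ and absorb the troublesome range $m\in(\sqrt{t/\log t},\,\sqrt t\,)$ into the error term, and you evaluate the main term by interchanging the order of summation and invoking Mertens rather than by comparison with an integral.
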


For each odd $n \ge 1$, the polynomial $E_n^{(n)}(x)$ has a linear factor.  To see this, it suffices to show, for such $n$, that $E_n^{(n)}(n/2) = 0$. By \eqref{eulerdef}, we obtain
\[
\sum_{n=0}^{\infty}E_n^{(m)}(m/2)\dfrac{t^n}{n!}
= \left(\dfrac{2}{e^t+1}\right)^m e^{tm/2}
= \left(\dfrac{2}{e^{t/2}+e^{-t/2}}\right)^m, 
\]
which is an even function of $t$. Thus, in fact, we have $E_n^{(m)}(m/2) = 0$ for all positive integers $n$ and $m$, with $n$ odd.

Since $E_{1}^{(1)}(x) = x - 1/2$ and $E_{n}^{(n)}(x)$ has a linear factor for $n$ odd, the polynomials $E_{n}^{(n)}(x)$ are not Eisenstein for $n$ odd.  
Thus, the value of $(\log 2)/2$ cannot be replaced by a constant $> 1/2$ in Theorem~\ref{positivepropthm}.  
Based on the Siegel heuristic (cf.~\cite[Chapter~6, VI]{ribenboim}) that the residue classes of the numerators of $B_{2n}$ are randomly distributed 
and based on lower bounds for the order of $2$ modulo most primes \cite{murty}, we deduce from Theorem~\ref{eisensteinpolynsthm} that, when $n$ is even, typically $E_{n}^{(n)}(x)$ will be in Eisenstein form with respect to the largest prime divisor of $n$.  
More precisely, assuming the Siegel heuristic, Theorem~\ref{eisensteinpolynsthm} implies that the exact density of $n$ for which the largest prime
divisor $p$ of $n$ is odd and $p$ does not divide $m (2^m-1)B_m$, where $m = n/p$, is $1/2$.
This heuristic argument is supported by the data in Table~\ref{onlytable} obtained by applying Theorem~\ref{eisensteinpolynsthm} together with Kummer's congruence \cite{kummer0} to speed up computations.

\begin{table}[ht]
\caption{Proportion of $E_{n}^{(n)}(x)$ in Eisenstein form with respect to $n$'s largest prime divisor $p$}
\begin{center}
\renewcommand{\tabcolsep}{10pt}
\renewcommand{\arraystretch}{1.2}
\begin{tabular}{|c|c|c|}
\hline
upper bound on $n$ & \# in Eisenstein form w.r.t.~$p$ & percentage \\
\hline
$10^{3}$ & 435 & 0.435 \\
$10^{4}$ & 4642 & 0.4642 \\
$10^{5}$ & 48026 & 0.48026 \\
$10^{6}$ & 489165 & 0.489165 \\
$10^{7}$ & 4939774 & 0.4939774 \\
$10^{8}$ & 49662803 & 0.49662803 \\
\hline
\end{tabular}
\end{center}
\label{onlytable}
\end{table}

The main part of the paper is devoted to the proof of Theorem~\ref{eisensteinpolynsthm}.  The proof of Theorem~\ref{positivepropthm} is based on applying Theorem~\ref{eisensteinpolynsthm} in the case that $p > m$. As a positive proportion of $n$ have a prime factor greater than $\sqrt{n}$, it is reasonable to expect that one can deduce a positive proportion of $n$ can be shown to be Eisenstein in this manner.  The ideas for the proof of Theorem~\ref{positivepropthm} are closely related to arguments in \cite{adelfil}.  However, we are able to get a better density bound by modifying the arguments slightly.  The better bound applies to the case of the Bernoulli polynomials $B_n^{(n)}(x)$ dealt with in \cite{adelfil} and as a consequence the right-hand side of \eqref{adelfilresult} can be replaced with $(\log 2)/2$.  As the argument for this sharpening of \eqref{adelfilresult} is essentially identical to our proof of Theorem~\ref{positivepropthm}, we do not elaborate on improving \eqref{adelfilresult} further.

Experiments suggested that for most even numbers $n$ and primes $p$ for which $p$ divides $n$ but $p^2$ does not, the polynomial $E_{n}^{(n)}(x)$ is Eisenstein with respect to $p$. But we did encounter examples where that was not the case, such as $n = 8\cdot 17$ where $E_{n}^{(n)}(x)$ is not Eisenstein with respect to $17$. As was pointed out to us by Pieter Moree, there are connections to these observations with recent papers on Genocchi numbers $G_n = 2(1-2^n)B_n$ \cite{moree1,moree2}.  To be precise, an odd prime $p$ is \textit{$G$-regular} if $p$ does not divide each of $G_{2}, G_{4}, \ldots, G_{p-3}$ and otherwise is \textit{$G$-irregular}. As a consequence of Theorem~\ref{eisensteinpolynsthm} and \cite[Lemma~4, Proposition~1]{moree2}, if $n = mp$ where $m$ is even and $p \nmid m$, then $E_{mp}^{(mp)}(x)$ is in Eisenstein form with respect to $p$ if $p$ is $G$-regular and $p$ is not a Wieferich prime (that is, a prime $p$ for which $p^2$ divides $2^{p-1}-1$). The Wieferich primes up to $6.7 \cdot 10^{15}$ are simply $1093$ and $3511$ both of which are $G$-irregular.  The Wieferich primes play a different role in that these are the only primes $p$ where if $p-1$ divides $m$ and $p \nmid m$, then $E_{mp}^{(mp)}(x)$ is not in Eisenstein form with respect to $p$. The prime $17$ is $G$-irregular accounting for the example $n = 8 \cdot 17$ above. Despite the example, we note that often $E_{mp}^{(mp)}(x)$ will be in Eisenstein form when $p$ is $G$-irregular (for example, $E_{mp}^{(mp)}(x)$ is in Eisenstein form for $p = 17$ and $m \in \{ 2, 4, 6 \}$).  The following is a direct consequence of Theorem~\ref{eisensteinpolynsthm} above and Theorem~1.10 in \cite{moree1}.

\begin{theorem}\label{newthm}
Let $\varepsilon > 0$ be arbitrary and fixed. The number of primes $2 < p \le x$
for which $E_{mp}^{(mp)}(x)$ is not in Eisenstein form with respect to $p$ for some even
$m$ coprime to $p$ is at least
\[
\bigg( 1 - \dfrac{3A}{2} - \varepsilon \bigg) \dfrac{x}{\log x},
\]
where $A$ is the Artin constant
\[
A =
\prod_{p \text{ prime}}
\bigg( 1 - \dfrac{1}{p(p-1)}\bigg)
= 0.3739558136192022880547280543464\ldots.
\]
\end{theorem}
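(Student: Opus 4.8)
The plan is to show that the set of primes described in the theorem contains all of the $G$-irregular primes, and then to invoke the lower bound for the count of $G$-irregular primes furnished by Theorem~1.10 of \cite{moree1}. Since that result asserts that the number of $G$-irregular primes $p \le x$ is at least $(1 - 3A/2 - \varepsilon)\,x/\log x$, the containment will immediately yield the stated bound.

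To establish the containment, I would first translate the Eisenstein condition into a divisibility condition using Theorem~\ref{eisensteinpolynsthm}. Fix an odd prime $p$ and an even $m$ with $\gcd(m,p) = 1$. By Theorem~\ref{eisensteinpolynsthm}, the polynomial $E_{mp}^{(mp)}(x)$ fails to be in Eisenstein form with respect to $p$ exactly when $p \mid m(2^m - 1)B_m$; since $p \nmid m$, this is equivalent to $\nu_p\bigl((2^m-1)B_m\bigr) \ge 1$. Recalling that $G_m = 2(1 - 2^m)B_m$, so that $(2^m-1)B_m = -G_m/2$, and using that $p$ is odd, this last condition is in turn equivalent to $p \mid G_m$.

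Next I would verify that every $G$-irregular prime lies in the set under consideration. Suppose $p$ is $G$-irregular. By definition there is an even integer $m$ with $2 \le m \le p-3$ and $p \mid G_m$. Such an $m$ is even and, being strictly less than $p$, is coprime to $p$; moreover $(p-1) \nmid m$, so no von Staudt--Clausen denominator intervenes and the equivalence of the previous paragraph applies verbatim. Hence $E_{mp}^{(mp)}(x)$ is not in Eisenstein form with respect to $p$ for this even $m$ coprime to $p$, so $p$ belongs to the set counted in the theorem. Combining this inclusion with the lower bound for $G$-irregular primes from \cite[Theorem~1.10]{moree1} then completes the proof.

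Because the argument is a short deduction from two external inputs, there is no serious analytic obstacle internal to the proof; the real content is carried by Theorem~1.10 of \cite{moree1}, whose constant $3A/2$ reflects the fact that a $G$-regular prime must have the order of $2$ so large (roughly, $2$ a primitive root modulo $p$, or of index two subject to a parity constraint) that the density of such primes is bounded above by $3A/2$. The only points demanding care on our side are bookkeeping: ensuring that the index $m$ produced by $G$-irregularity is genuinely even and coprime to $p$ (which the range $m \le p-3$ guarantees) and that the identity relating $(2^m-1)B_m$ to $G_m$ is invoked only at odd primes.
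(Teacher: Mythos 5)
Your proposal is correct and follows exactly the route the paper intends: the paper states Theorem~\ref{newthm} as a direct consequence of Theorem~\ref{eisensteinpolynsthm} and Theorem~1.10 of \cite{moree1} without writing out the deduction, and your argument (a $G$-irregular prime $p$ divides $G_m=2(1-2^m)B_m$ for some even $m\le p-3$, hence $p\mid m(2^m-1)B_m$ with $m$ even and coprime to $p$, so $E_{mp}^{(mp)}(x)$ is not in Eisenstein form with respect to $p$, and the count then comes from the lower bound on $G$-irregular primes) is precisely that deduction, with the relevant bookkeeping ($m$ even, $m<p$, $(p-1)\nmid m$, $p$ odd) handled correctly.
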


Note that $1-3A/2 = 0.4390662795\ldots$.  Thus, the set of $G$-irregular primes has density at least $0.439$.  According to \cite{moree1}, the list of $G$-irregular primes begins with
\[
17, 31, 37, 41, 43, 59, 67, 73, 89, 97, 101, \ldots,
\]
and Siegel's heuristics suggest that the correct proportion of primes which are $G$-irregular is
\[
1 - \dfrac{3A}{2\sqrt{e}}
= 0.6597765\ldots.
\]   

\section{Background}

Recall that the $m$th order Euler polynomial of degree $n$, denoted $E_n^{(m)}(x)$, is defined by \eqref{eulerdef}.  
The left-hand side of the equation is to be interpreted as the formal product of Maclaurin series in $t$.  
We will focus on the generalized Euler polynomials $E_n^{(n)}(x)$, or in other words, the case $m=n$. 

We obtain information on the Maclaurin series of $2/(e^t+1)$ as follows.  Observe that
\[
\dfrac{2}{e^t + 1} - 1
= \dfrac{1-e^t}{1+e^t}
= \dfrac{e^{-t/2} - e^{t/2}}{e^{-t/2} + e^{t/2}}
= - \tanh(t/2).
\]
The Maclaurin series for the hyperbolic tangent is well understood, and in particular we deduce (cf.~\cite{abramsteg})
\[
\dfrac{2}{e^t + 1} - 1 = \sum_{n=0}^{\infty} \dfrac{-2 (2^{2n}-1) B_{2n} t^{2n-1}}{(2n)!},
\]
where $B_{2n}$ denotes the $2n^{\rm th}$ Bernoulli number.  Therefore, we see that
\[
\dfrac{2}{e^t + 1} 
= \sum_{n=0}^{\infty} \dfrac{\ell_n}{n! \,2^{n}} \,t^n,
\]
where
\[
\ell_{0} = 1, \quad
\ell_{2n} = 0 \ \text{ for } \ n \ge 1, \quad
\text{and} \quad
\ell_{2n-1} = \dfrac{-2^{2n-1} (2^{2n}-1) B_{2n}}{n} \ \text{ for } \ n \ge 1.
\]
It is known that $\ell_{n} \in \mathbb Z$ for all $n \ge 0$ (cf.~the discussion of $C_v$ in \cite[pp.~27--28]{norlund}).

Since $e^{tx} = \sum_{m=0}^{\infty} x^m t^m/m!$, we see from \eqref{eulerdef} that $E_{n}^{(n)}(x)$ is $n!$ times the coefficient of $t^{n}$ in the expression
\begin{equation*}
\left(\sum_{j=0}^\infty \dfrac{\ell_j}{j!\,2^{j}}t^j\right)^{n}\left(\sum_{m=0}^{\infty}\dfrac{x^m t^m}{m!}\right).
\end{equation*}
For an integer $k \in [0,n]$, the Multinomial Theorem implies the coefficient of $t^{n-k}$ in the power in the expression above is 
\[
\sum_{\substack{e_{1} + 2 e_{2} + \cdots + n e_{n} = n-k \\ e_{0} = n - e_{1} - e_{2} - \cdots - e_{n} \\ e_{1} \ge 0, \ldots, e_{n} \ge 0}} \dfrac{n!}{e_{0}! e_{1}! e_{2}! \cdots e_{n}!} \prod_{j = 1}^{n} \bigg(  \dfrac{\ell_{j}}{j! \,2^{j}}  \bigg)^{e_{j}},
\]
where in the product if $\ell_j = e_j = 0$, then $(\ell_j/(j!\,2^{j}))^{e_j}$ is to be interpreted as $1$.  
We deduce that 
\[
E_{n}^{(n)}(x) = \sum_{k= 0}^{n} E_{n,k} x^{k},
\]
where
\begin{equation}\label{eulercoeffformula}
E_{n,k} = \dfrac{n!}{k!}
 \sum_{\substack{e_{1} + 2 e_{2} + \cdots + n e_{n} = n-k \\ e_{0} = n - e_{1} - e_{2} - \cdots - e_{n} \\ e_{1} \ge 0, \ldots, e_{n} \ge 0}} \dfrac{n!}{e_{0}! e_{1}! e_{2}! \cdots e_{n}!} \prod_{j = 1}^{n} \bigg(  \dfrac{\ell_{j}}{j! \,2^{j}}  \bigg)^{e_{j}}.
\end{equation}
Note that $E_{n}^{(n)}(x)$ is a monic polynomial with rational coefficients.  Given that 
$e_{1} + 2 e_{2} + \cdots + n e_{n} = n-k$ and $e_{0} = n - e_{1} - e_{2} - \cdots - e_{n}$ in the sums above, the expressions
\[
\dfrac{n!}{k!} \prod_{j = 1}^{n} \bigg(  \dfrac{1}{j!} \bigg)^{e_{j}} 
\qquad \text{and} \qquad
\dfrac{n!}{e_{0}! e_{1}! e_{2}! \cdots e_{n}!}
\]
can be viewed as multinomial coefficients and hence integers.  Thus, the coefficients of $E_{n}^{(n)}(x)$ times a power of $2$ will lie in $\mathbb Z$.  We deduce that, for some $N = N(n) \in \mathbb Z^{+}$, we have 
\begin{equation}\label{eulercoeffdef}
E_{n,n} = 1 \quad \text{ and } \quad 2^{N} E_{n}^{(n)}(x) \in \mathbb Z[x].
\end{equation}

\section{Preliminaries for {$n=mp$}}

For the rest of this paper, assume $n=mp$ where $p$ is an odd prime and $m$ is a positive even integer.  Our next goal is to establish Theorem~\ref{eisensteinpolynsthm}.

For $0 \le k \le mp$, we obtain from \eqref{eulercoeffformula} that
\begin{equation}\label{eulercoeffformulam}
E_{mp,k} = \dfrac{(mp)!}{k!}
 \sum_{\substack{e_{1} + 2 e_{2} + \cdots + mp e_{mp} = mp-k \\ e_{0} = mp - e_{1} - e_{2} - \cdots - e_{mp} \\ e_{1} \ge 0, \ldots, e_{mp} \ge 0}} \dfrac{(mp)!}{e_{0}! e_{1}! e_{2}! \cdots e_{mp}!} \prod_{j = 1}^{mp} \bigg(  \dfrac{\ell_{j}}{j! \,2^{j}}  \bigg)^{e_{j}}.
 \end{equation}
Hence, \eqref{eulercoeffdef} holds with $n = mp$, and 
for some $N=N(m,p)\in \mathbb{Z}^+$, we have
 \begin{equation}\label{eulercoeffdefm}
E_{mp,mp}=1 \quad \text{ and } \quad 2^N E_{mp}^{(mp)}(x) \in \mathbb{Z}[x].
 \end{equation}

Observe that $E_{mp}^{(mp)}(x)$ is in Eisenstein form with respect to the odd prime $p$ if and only if each of the following holds:
\begin{enumerate}[(a)]
    \item $p\nmid 2^N E_{mp, mp}$,
    \item $p\mid 2^N E_{mp, k}$ for all $0\le k\le mp-1$, and
    \item $p^2\nmid 2^N E_{mp, 0}$.
\end{enumerate}

\begin{proof}[Proof that part (a) always holds]
We obtain from \eqref{eulercoeffdefm} that $2^N E_{mp,mp}=2^N$. Since $p$ is an odd prime, $p\nmid 2^N E_{mp, mp}$.
\end{proof}

\begin{proof}[Proof that part (b) always holds]
Recall $\ell_j \in \mathbb Z$ for all $j$ and $\ell_{2n}=0$ for positive integer $n$. From \eqref{eulercoeffformulam}, it suffices to show $p$ divides at least one of the multinomial coefficients
\[
a_k\left(e_1, \ldots, e_{mp}\right) = \dfrac{(mp)!}{k!} \prod_{j = 1}^{mp} \bigg(  \dfrac{1}{j!} \bigg)^{e_{j}} 
\quad \text{and} \quad
b_k\left(e_1, \ldots, e_{mp}\right)=\dfrac{(mp)!}{e_{0}! e_{1}! e_{2}! \cdots e_{mp}!}
\]
for each integer $k \in [0,mp)$ and each possible set $\{e_1, \ldots, e_{mp}\}$ of non-negative integers with $e_{1} + 2 e_{2} + \cdots + mp e_{mp} = mp-k$.
To do so, we will make use of the following lemma due to E.~E.~Kummer \cite{kummer}.

\begin{lemma}[Kummer \cite{kummer}]\label{kummerbinomial}
Let $n$ and $u$ be integers with $n \ge u \ge 0$, and let $p$ be a prime.  
If $v$ is the number of carries when adding $u$ and $n-u$ in base $p$, then
\[
\nu_{p}\bigg( \binom{n}{u} \bigg) = v.
\]
\end{lemma}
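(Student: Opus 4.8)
The plan is to convert the claim into a statement about base-$p$ digit sums through Legendre's formula and then to connect digit sums to carries. For a nonnegative integer $k$, write $s_p(k)$ for the sum of its digits in base $p$. Legendre's formula asserts that $\nu_p(k!) = \sum_{i \ge 1} \lfloor k/p^i \rfloor$, and writing $k$ in base $p$ this sum telescopes to $(k - s_p(k))/(p-1)$. First I would record this formula (citing it, since it is standard) and then apply it to each of the three factorials in $\binom{n}{u} = n!/(u!\,(n-u)!)$. Since $n = u + (n-u)$, the terms linear in the integers cancel, leaving
\[
\nu_p\bigg(\binom{n}{u}\bigg) = \frac{s_p(u) + s_p(n-u) - s_p(n)}{p-1}.
\]

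The heart of the proof is then the identity $s_p(u) + s_p(n-u) - s_p(n) = (p-1)v$, where $v$ is the number of carries arising in the base-$p$ addition of $u$ and $n-u$. To establish it, I would analyze the schoolbook addition algorithm position by position. At each digit position the two input digits together with the incoming carry either sum to less than $p$, producing no carry, or to at least $p$, producing a carry of $1$ into the next position and recording a digit that is $p$ smaller than it would otherwise be. Each carry thus removes $p$ from the combined input digit sum at its own position while contributing $1$ back at the next position, for a net loss of $p-1$ per carry relative to the digit sum of $n$. Summing this accounting over all positions yields the identity, and substituting it into the displayed formula gives $\nu_p(\binom{n}{u}) = v$, as claimed.

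The main obstacle will be the careful bookkeeping in the carry analysis: one must simultaneously track the base-$p$ digits of $u$, of $n-u$, and of $n$, together with the carry entering and leaving each position, and verify that the deficit of exactly $p-1$ per carry holds uniformly across all positions, including the most significant ones where a final carry can introduce a new leading digit of $n$. The argument is elementary, but it requires precise indexing of positions and carries to avoid off-by-one slips in the propagation.
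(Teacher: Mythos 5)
The paper does not prove this lemma at all: it is stated as a classical result of Kummer with a citation to his 1852 paper, so there is no in-paper argument to compare against. Your proposal is the standard modern derivation, and it is correct. Legendre's formula gives $\nu_p(k!) = (k - s_p(k))/(p-1)$; the terms linear in $n$, $u$, $n-u$ cancel in $\nu_p(n!) - \nu_p(u!) - \nu_p((n-u)!)$; and the remaining digit-sum identity $s_p(u) + s_p(n-u) - s_p(n) = (p-1)v$ follows cleanly by summing the per-position relation $a_i + b_i + c_i = d_i + p\,c_{i+1}$ over all positions $i$, where $a_i, b_i, d_i$ are the base-$p$ digits of $u$, $n-u$, $n$ and $c_i \in \{0,1\}$ is the carry into position $i$ with $c_0 = 0$: the carries contribute $\sum_i c_i = v$ on the left and $p\sum_i c_{i+1} = pv$ on the right, giving the deficit of $p-1$ per carry in one stroke. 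That summation is the tidy way to dispatch the bookkeeping you flag as the main obstacle, and it handles the most significant position automatically, since a final outgoing carry simply creates a new position with $a_i = b_i = 0$, $c_i = 1$, $d_i = 1$. In the context of the paper you would, as the authors do, cite this rather than reprove it, but your argument is complete in outline and sound.
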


\begin{corollary}\label{kummerbincor}
Let $n$ be a positive integer, and let $u_1, u_2, \ldots, u_r$ be non-negative integers such that $n=u_1+u_2+\cdots +u_r$. Then 
\[
  \nu_p\left(\dfrac{n!}{u_1!\cdots u_r!}\right) = v
\]
where $v$ is the number of carries when performing the additions $u_1 + u_2 + \ldots + u_r$ in base $p$ from left to right.
\end{corollary}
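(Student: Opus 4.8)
The plan is to prove the corollary by induction on $r$, the number of summands, using Lemma~\ref{kummerbinomial} as the sole nontrivial input. The guiding idea is that a multinomial coefficient factors as a product of a smaller multinomial coefficient and a binomial coefficient, so that $\nu_p$ splits additively and the carries split correspondingly.

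For the base case $r = 2$ we have $n = u_1 + u_2$, hence $u_2 = n - u_1$, and the coefficient $n!/(u_1!\,u_2!)$ is exactly the binomial coefficient $\binom{n}{u_1}$. The single base-$p$ addition $u_1 + u_2$ produces $v$ carries, and Lemma~\ref{kummerbinomial} gives $\nu_p\!\left(\binom{n}{u_1}\right) = v$, as required. (The case $r = 1$ is vacuous: the coefficient is $1$, there are no additions, and $v = 0$.) For the inductive step, assume $r \ge 3$ and that the result holds for $r-1$ summands. Setting $s = u_1 + u_2 + \cdots + u_{r-1}$, so that $n = s + u_r$, one factors
\[
\dfrac{n!}{u_1! \cdots u_r!} = \dfrac{s!}{u_1! \cdots u_{r-1}!} \cdot \dfrac{n!}{s!\, u_r!} = \dfrac{s!}{u_1! \cdots u_{r-1}!} \cdot \binom{n}{u_r},
\]
and taking $p$-adic valuations turns the product into a sum:
\[
\nu_p\!\left(\dfrac{n!}{u_1! \cdots u_r!}\right) = \nu_p\!\left(\dfrac{s!}{u_1! \cdots u_{r-1}!}\right) + \nu_p\!\left(\binom{n}{u_r}\right).
\]
By the induction hypothesis the first term on the right counts the carries arising when $u_1 + \cdots + u_{r-1}$ is formed in base $p$ from left to right, yielding the partial sum $s$; by Lemma~\ref{kummerbinomial} applied to $s + u_r = n$, the second term counts the carries in the final addition of $u_r$ to $s$. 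Their sum is precisely the total number of carries over the whole left-to-right computation of $u_1 + \cdots + u_r$, namely $v$, which closes the induction.

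The one point demanding care is the bookkeeping of carries. The phrase ``from left to right'' must be read as summing the $u_i$ sequentially (first form $u_1 + u_2$, then add $u_3$ to the result, and so on), so that every carry occurs in exactly one of the $r-1$ intermediate additions. With this reading, the carries accumulated in producing $s$ together with those in the step $s + u_r$ account for each carry once and only once, and the additivity of $\nu_p$ matches the additivity of the carry count term by term. I expect this carry-accounting to be the only subtlety; once the convention is fixed, the argument is an immediate induction whose engine is Kummer's binomial result, Lemma~\ref{kummerbinomial}.
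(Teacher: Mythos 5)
Your proof is correct and is essentially the paper's argument: the paper writes the multinomial coefficient as the telescoping product $\binom{u_1+u_2}{u_2}\binom{u_1+u_2+u_3}{u_3}\cdots\binom{u_1+\cdots+u_r}{u_r}$ and applies Lemma~\ref{kummerbinomial} to each factor, which is exactly what your induction does one factor at a time. The carry-accounting convention you spell out is the same one the paper intends by ``from left to right.''
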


Corollary~\ref{kummerbincor} is an immediate consequence of Lemma~\ref{kummerbinomial} and the identity
\[
\dfrac{n!}{u_1!\cdots u_r!} = \binom{u_1+u_2}{u_2}\binom{u_1+u_2+u_3}{u_3} \cdots \binom{u_1+u_2+ \ldots + u_r}{u_r}.
\]
Observe that the corollary implies that the number of carries in Corollary~\ref{kummerbincor} is independent of the order in which we add the numbers $u_1, u_2, \ldots, u_r$ in base $p$.
Our main interest in Corollary~\ref{kummerbincor} is the case where $v = 0$, which occurs precisely when there are no carries when adding the numbers $u_1, u_2, \ldots, u_r$ in base $p$.  To put this another way, let $d_j^{(i)} \in \{ 0, 1, \ldots, p-1\}$ (digits in base $p$) for all $i$ and $j$ with $0 \le i \le r$ and $0 \le j \le n_p = \lfloor \log n/\log p \rfloor$.  Suppose
\[
n = \sum_{j=0}^{n_p} d_j^{(0)} p^j
\quad \text{ and } \quad
u_i = \sum_{j=0}^{n_p} d_j^{(i)} p^j \text{ for } i \in \{ 1, \ldots, r \}.
\]
Then $v = 0$ precisely when
\begin{equation}\label{digitsomeforpartb}
d_j^{(0)} = d_j^{(1)} + d_j^{(2)} + \cdots + d_j^{(r)},
\quad \text{ for all } j \in \{0, 1, \ldots, n_p\}.
\end{equation}

Now, suppose $p$ does not divide $b_k = b_k\left(e_1, \ldots, e_{mp}\right)$. We complete the proof of (b) by showing $a_k = a_k\left(e_1, \ldots, e_{mp}\right)$ is divisible by $p$.
Since the multinomial coefficient 
$b_k$ given above has numerator $(mp)!$, we deduce from Corollary~\ref{kummerbincor} with $v = 0$ that each $e_j$ is divisible by $p$.  Since $k < mp$, we have $e_{1} + 2 e_{2} + \cdots + mp e_{mp} = mp-k > 0$ so that at least one of $e_1, \ldots, e_{mp}$ is positive.  Suppose such an $e_j$ is $e_{j'}$.  Since $p \mid e_{j'}$, we deduce $e_{j'} \ge p$.  On the other hand, there are $e_{j'}$ occurrences of $j'!$ in the denominator of the multinomial coefficient $a_k$, and it is impossible to add a positive integer to itself $p$ times in base $p$ without having a carry (just consider what happens to the the right-most digit in base $p$ during the additions or refer to \eqref{digitsomeforpartb}).  We deduce $p \mid a_k$.  
\end{proof}

We are thus left with determining when (c) occurs in order to determine when $E_{mp}^{(mp)}(x)$ is in Eisenstein form with respect to $p$.  As (c) is a result about the constant term of $E_{mp}^{(mp)}(x)$, we study this constant term next.
To finish the proof of Theorem~\ref{eisensteinpolynsthm}, 
we want to show that $p^2 \mid E_{mp,0}$ if and only if $p$ divides $m (2^m-1) B_m$.


 \section{The constant term of $ E_{mp}^{(mp)}(x)$}

To show part (c), we make use of work of G.~D.~Liu and W.~P.~Zhang \cite{LiuZhang} on generalized Euler numbers, which we will write as $\overline{E}_{2n}^{(x)}$ using a slightly different notation than in \cite{LiuZhang} to avoid confusion with the generalized Euler polynomials. The authors in \cite{LiuZhang} define $\overline{E}_{2n}^{(x)}$ through the equation
\[
\bigg( \dfrac{2}{e^t + e^{-t}} \bigg)^{x} = \sum_{n=0}^{\infty} \overline{E}_{2n}^{(x)} \dfrac{t^{2n}}{(2n)!}.
\]
Observe that the left-hand side above is an even function, so its Maclaurin series only involves terms of even degree in $t$ as shown.  Also, by taking $t = 0$, one can see that $\overline{E}_{0}^{(k)} = 1$ for all positive integers $k$. 
Note that $\overline{E}_{2n}^{(1)}$ denotes the classical $(2n)^{\rm th}$ Euler number.

Define the Stirling numbers of the first kind $s(n,k)$ for integers $n$ and $k$ with $n \ge k \ge 0$ by the double recurrence relations
\begin{gather*}
s(n,0) = 0, \quad \forall n \ge 1, \qquad
s(n,n) = 1, \quad \forall n \ge 0,
\qquad \text{and}\\
s(n,k) = s(n-1,k-1) - (n-1) s(n-1,k), \quad
\forall n > k \ge 1.
\end{gather*}
Also, define the central factorial numbers $T(n,k)$ for integers $n$ and $k$ with $n \ge k \ge 0$ by the double recurrence relations
\begin{gather*}
T(n,0) = 0, \quad \forall n \ge 1, \qquad
T(n,n) = 1, \quad \forall n \ge 0,
\qquad \text{and}\\
T(n,k) = T(n-1,k-1) + k^2 T(n-1,k), \quad
\forall n > k \ge 1.
\end{gather*}
Though other definitions of these numbers would suffice for our purposes,  these recurrence relations help emphasize that the numbers $s(n,k)$ and $T(n,k)$ are integers.  Following \cite{LiuZhang}, for integers $n$ and $k$ with $n \ge k \ge 1$, we also define
\[
\rho(n,k)=(-1)^k\sum_{j=k}^n\dfrac{(2j)!}{2^j j!}s(j,k)T(n,j).
\]
As $2^j j!$ can be viewed as the product of the even positive integers $\le 2j$, we deduce that $\rho(n,k) \in \mathbb Z$.  The following is due to G.~D.~Liu and W.~P.~Zhang (see Theorem~2.1, the sentence after (2.18), and (3.16) in \cite{LiuZhang}).

\begin{theorem}[Liu and Zhang \cite{LiuZhang}, 2008]\label{liuthm1}
Let $n$ and $k$ be positive integers. 
Then 
\[
\overline{E}_{2n}^{(k)} = \sum_{i=1}^n\rho(n,i)\,k^i.
\]
Furthermore, 
\[
\rho(n,1) = -\overline{E}_{2n-2}^{(2)}
= -\dfrac{2^{2n-1} (2^{2n}-1) B_{2n}}{n}.
\]
\end{theorem}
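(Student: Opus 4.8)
The plan is to establish the polynomial identity $\overline{E}_{2n}^{(x)} = \sum_{i=1}^{n} \rho(n,i)\, x^{i}$ as an identity in the indeterminate $x$, after which setting $x = k$ for a positive integer $k$ gives the first assertion immediately. To do this I would read off the coefficient of $x^{i}$ directly from the defining generating function $(\cosh t)^{-x} = \sum_{n \ge 0} \overline{E}_{2n}^{(x)}\, t^{2n}/(2n)!$. First I would substitute $\cosh t = 1 + u$ with $u = \cosh t - 1 = 2\sinh^{2}(t/2)$ and expand the (formal) binomial series $(1+u)^{-x} = \sum_{j \ge 0} \binom{-x}{j} u^{j}$; this is a legitimate identity of power series in $t$ because $u = O(t^{2})$, so each power of $t$ receives contributions from only finitely many $j$. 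The dependence on $x$ is carried entirely by $\binom{-x}{j}$, and using the falling-factorial expansion $\prod_{i=0}^{j-1}(y - i) = \sum_{k} s(j,k)\, y^{k}$ with $y = -x$ one obtains $\binom{-x}{j} = \frac{1}{j!} \sum_{k=0}^{j} (-1)^{k} s(j,k)\, x^{k}$, which is exactly where the Stirling numbers of the first kind enter.

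Next I would expand $u^{j}$ into its Maclaurin series in $t$. Writing $u^{j} = 2^{-j}\bigl(2\sinh(t/2)\bigr)^{2j}$, the central factorial numbers appear through the generating identity $\bigl(2\sinh(t/2)\bigr)^{2j}/(2j)! = \sum_{n \ge j} T(n,j)\, t^{2n}/(2n)!$, which I would verify from the recurrence $T(n,j) = T(n-1,j-1) + j^{2} T(n-1,j)$ together with the boundary values $T(n,n) = 1$ and $T(n,0) = 0$ for $n \ge 1$ (for instance by checking that the right-hand side, as a function of $t$, satisfies the same recurrence in $j$). Substituting both expansions into $(1+u)^{-x}$ and collecting the coefficient of $t^{2n}/(2n)!$ gives $\overline{E}_{2n}^{(x)} = \sum_{j=0}^{n} \frac{(2j)!}{2^{j} j!}\, T(n,j) \sum_{k=0}^{j} (-1)^{k} s(j,k)\, x^{k}$. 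Interchanging the two finite sums (summing over $k$ first and then over $j$ from $k$ to $n$), and noting that for $n \ge 1$ the term $k = 0$ drops out because $s(j,0) = 0$ for $j \ge 1$ while $T(n,0) = 0$, I arrive at $\overline{E}_{2n}^{(x)} = \sum_{k=1}^{n} \rho(n,k)\, x^{k}$ with $\rho(n,k)$ exactly as defined.

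For the ``furthermore'' part I would avoid unwinding the double sum for $\rho(n,1)$ and instead read it off as the linear coefficient of $\overline{E}_{2n}^{(x)}$ in $x$, namely $\partial_{x}(\cosh t)^{-x}\big|_{x=0} = -\log\cosh t$. Hence $\sum_{n \ge 1} \rho(n,1)\, t^{2n}/(2n)! = -\log\cosh t$, and since $(\log \cosh t)' = \tanh t = \sum_{n \ge 1} \frac{2^{2n}(2^{2n}-1)B_{2n}}{(2n)!}\, t^{2n-1}$, integrating term by term and comparing coefficients yields $\rho(n,1) = -2^{2n-1}(2^{2n}-1)B_{2n}/n$. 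The remaining equality $\rho(n,1) = -\overline{E}_{2n-2}^{(2)}$ comes from the same $\tanh$ series: since $(\cosh t)^{-2} = (\tanh t)'$, comparing $\sum_{m \ge 0} \overline{E}_{2m}^{(2)}\, t^{2m}/(2m)!$ with the differentiated series of $\tanh t$ and shifting the index shows $\overline{E}_{2n-2}^{(2)} = 2^{2n}(2^{2n}-1)B_{2n}/(2n)$, which matches the value just computed.

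The step I expect to be the main obstacle is establishing the central factorial generating identity of the second paragraph with the correct normalization, and then carrying all the bookkeeping factors---the $\frac{(2j)!}{2^{j} j!}$, the sign $(-1)^{k}$ produced by $y = -x$, and the $2^{-j}$ from rewriting $u^{j}$---through the coefficient extraction without error, since a single misplaced power of $2$ or sign would corrupt $\rho(n,k)$. By contrast, the formal-power-series manipulations (the binomial expansion in $u$ and the interchange of the two finite inner sums) are routine to justify, as every equality is an identity of Maclaurin coefficients; the real care lies in the two combinatorial expansions for $s(j,k)$ and $T(n,j)$ and in their bookkeeping.
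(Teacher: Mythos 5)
Your proof is correct. Note, however, that the paper does not prove this statement at all: it is imported verbatim from Liu and Zhang (Theorem~2.1, the remark after (2.18), and (3.16) of \cite{LiuZhang}), so there is no in-paper argument to compare against. What you have written is a clean, self-contained derivation that is very much in the spirit of the cited source --- the shape of $\rho(n,k)$, with $s(j,k)$ paired against $T(n,j)$ and the weight $(2j)!/(2^j j!)$, essentially forces the decomposition $(\cosh t)^{-x}=(1+u)^{-x}$ with $u=2\sinh^2(t/2)$, the binomial series in $u$ carrying the Stirling numbers of the first kind via $\binom{-x}{j}=\frac{1}{j!}\sum_k(-1)^k s(j,k)x^k$, and the central factorial numbers entering through $\bigl(2\sinh(t/2)\bigr)^{2j}/(2j)!=\sum_{n\ge j}T(n,j)\,t^{2n}/(2n)!$. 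I checked the bookkeeping you were worried about: the factor $2^{-j}$ from $u^j=2^{-j}\bigl(2\sinh(t/2)\bigr)^{2j}$ combines with the $(2j)!/j!$ to give exactly $(2j)!/(2^j j!)$, the sign $(-1)^k$ lands where the definition of $\rho$ wants it, and the $k=0$ term vanishes for $n\ge 1$ because $T(n,0)=0$. The central factorial identity itself follows from $F_j''=F_{j-1}+j^2F_j$ for $F_j=\bigl(2\sinh(t/2)\bigr)^{2j}/(2j)!$, which matches the recurrence defining $T(n,j)$ after shifting the index, as you indicate. The ``furthermore'' computation via $\partial_x(\cosh t)^{-x}\big|_{x=0}=-\log\cosh t$ and $(\log\cosh t)'=\tanh t$ is also correct and agrees with the $\tanh$ expansion the paper itself records in its Background section. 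The only point worth making explicit is that setting $x=k$ in the binomial-series identity recovers the paper's definition of $\overline{E}_{2n}^{(k)}$ as the $k$th power of the reciprocal series; this is the standard fact $\sum_j\binom{-k}{j}u^j=(1+u)^{-k}$ for formal power series, but since the theorem is stated for integer orders it deserves a sentence.
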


Since $\rho(n,i) \in \mathbb Z$ for every $i \in \{ 1, 2, \ldots, n \}$, we deduce from the first equation in Theorem~\ref{liuthm1} that 
$\overline{E}_{2n}^{(k)} \in \mathbb Z$ for all positive integers $n$ and $k$.  Recall also that $\overline{E}_{0}^{(k)} = 1$ for every integer $k \ge 1$.  
We turn now to connecting the numbers $\overline{E}_{2n}^{(x)}$ to the constant term $E_{mp,0} = E_{mp}^{(mp)}(0)$ in our generalized Euler polynomial $E_{mp}^{(mp)}(x)$. 

As before, we take $m$ to be an even positive integer and $p$ to be an odd prime.  
By setting $x=0$ in \eqref{eulerdef} and replacing $t$ with $2t$, we see that
\begin{equation}\label{ourconstanttermeq}
\begin{aligned}
\bigg( \sum_{n=0}^{\infty} \overline{E}_{2n}^{(mp)} \dfrac{t^{2n}}{(2n)!} \bigg)
\bigg( \sum_{j=0}^{\infty} \dfrac{(-mp)^j t^j}{j!} \bigg) 
&= \left(\dfrac{2}{e^{t}+e^{-t}}\right)^{mp} e^{-mpt} \\[5pt]
&= \left(\dfrac{2}{e^{2t}+1}\right)^{mp} \\[5pt]
&= \sum_{n=0}^{\infty}E_n^{(mp)}(0)\dfrac{(2t)^n}{n!}.
\end{aligned}
\end{equation}
To obtain the term of degree $t^{mp}$ in the product on the left, we want to add terms of the form
\[
\overline{E}_{mp-j}^{(mp)} \dfrac{t^{mp-j}}{(mp-j)!} \cdot \dfrac{(-mp)^j t^j}{j!} = (-1)^{j} \binom{mp}{j} (mp)^j \,\overline{E}_{mp-j}^{(mp)} \dfrac{t^{mp}}{(mp)!},
\]
where $0 \le j \le mp$. 
Therefore, from \eqref{ourconstanttermeq}, we obtain
\[
2^{mp} E_{mp}^{(mp)}(0) = \sum_{j=0}^{mp} (-1)^{j} \binom{mp}{j} (mp)^j \,\overline{E}_{mp-j}^{(mp)}.
\]
Since $\overline{E}_{mp-j}^{(mp)} \in \mathbb Z$ for each $j$ in the sum, we obtain the congruence
\begin{equation}\label{prelimcongpartc}
2^{mp} E_{mp}^{(mp)}(0)
\equiv \overline{E}_{mp}^{(mp)} \pmod{p^2}.
\end{equation}

Since $m$ is even and $\rho(n,i) \in \mathbb Z$, from Theorem~\ref{liuthm1}, we see that
\begin{equation}\label{ebarmodpsquared}
\overline{E}_{mp}^{(mp)} 
=\sum_{i=1}^{mp/2}\rho\left(\dfrac{mp}{2},i\right)(mp)^i
\equiv mp \,\rho\left(\dfrac{mp}{2},1\right) \pmod{p^2},
\end{equation}
which in particular implies from \eqref{prelimcongpartc} that $E_{mp}^{(mp)}(0)$ is divisible by $p$. 
Furthermore, Theorem~\ref{liuthm1} implies
\begin{equation}\label{rhoequality}
\rho\left(\dfrac{mp}{2},1\right) 
= -\dfrac{2^{mp-1} (2^{mp}-1) B_{mp}}{mp/2}
= -\dfrac{2^{mp} (2^{mp}-1) B_{mp}}{mp}.
\end{equation}

We consider now two cases, depending on whether $p-1$ divides divides $m$ or not, beginning with the latter.

\vskip 5pt \noindent
\textbf{Case 1.}  \textit{$p-1$ does not divide $m$.}
\vskip 5pt
Since $p-1$ does not divide $m$, Kummer's congruence \cite{kummer0} (cf.~Corollary~2 in \cite{adelberg1}) implies
\[
\dfrac{B_{mp}}{mp} \equiv \dfrac{B_m}{m} \pmod{p}.
\]
Also, Fermat's Little Theorem gives us 
\[
2^{mp} \equiv 2^{m} \pmod{p} 
\qquad \text{and} \qquad
2^{mp}-1 \equiv 2^{m}-1 \pmod{p}.
\]
Combining the above with \eqref{prelimcongpartc}, \eqref{ebarmodpsquared} and \eqref{rhoequality}, we deduce
\[
E_{mp}^{(mp)}(0) \equiv -p \,(2^{m}-1) B_m \pmod{p^2}. 
\]

In this case, $E_{mp}^{(mp)}(x)$ is in Eisenstein form with respect to $p$ if and only if $p \nmid 2(2^{m}-1) B_m$, where the extra factor of $2$ is simply to ensure $2(2^{m}-1) B_m$ is an integer.  On the other hand, if $p \mid m$, then by an observation going back to J.~C.~Adams~\cite{adams} and which follows from Kummer's congruence noted above, we have that $p$ divides the numerator of $B_m$ so that $p \mid 2(2^{m}-1) B_m$.  Recalling $m$ is even, it follows that, for this case, $E_{mp}^{(mp)}(x)$ is in Eisenstein form with respect to $p$ if and only if $p \nmid m (2^{m}-1) B_m$.

\vskip 5pt \noindent
\textbf{Case 2.}  \textit{$p-1$ divides $m$.}
\vskip 5pt
The main difference in this case is that the von Staudt-Clausen Theorem (cf.~\cite{adelfil}) implies that $p$ exactly divides the denominator of $B_{m}$ and $B_{mp}$ (that is, $p$ divides these denominators and $p^2$ does not).  We will return to using this information shortly.  

If $p \mid m$, then \eqref{ebarmodpsquared} implies $\overline{E}_{mp}^{(mp)}$ is divisible by $p^2$ since $\rho(mp/2,1) \in \mathbb Z$.  From \eqref{prelimcongpartc}, we deduce $p^2 \mid E_{mp}^{(mp)}(0)$ so that $E_{mp}^{(mp)}(x)$ is not in Eisenstein form with respect to $p$. 

Suppose now $p \nmid m$.  Let $e$ be the order of $2$ modulo $p$, and let $r = \nu_{p}(2^e - 1)$.  Note then that 
\[
p^r \mid (2^e - 1)
\qquad \text{and} \qquad
p^{r+1} \nmid (2^e - 1),
\]
and $e$ is the order of $2$ modulo $p^r$.  
We claim that the order of $2$ modulo $p^{r+j}$ is $e p^{j}$ for every integer $j \ge 0$.  It suffices to show that 
\begin{equation}\label{case2indhyp}
p^{r+j} \mid \big( 2^{e p^{j}}  - 1 \big)
\qquad \text{and} \qquad
p^{r+j+1} \nmid \big( 2^{e p^{j}}  - 1 \big),
\end{equation}
for every $j \ge 0$.  The case $j = 0$ holds from the above. We give an induction argument, supposing now that for some integer $j_0 \ge 0$, we know \eqref{case2indhyp} holds with $j = j_0$.  Then there is an integer $s$ such that
\[
2^{e p^{j_0}} = 1 + s p^{r+j_0}
\qquad \text{and} \qquad
p \nmid s.
\]
We deduce, from the Binomial Theorem, that
\[
2^{ep^{j_0+1}} = (1+sp^{r+j_0})^p \equiv 1 + s p^{r+j_0+1} \pmod{p^{r+j_0+2}}.
\]
We obtain from this that \eqref{case2indhyp} holds with $j = j_0+1$, establishing \eqref{case2indhyp} for all $j \ge 0$ by induction.
Thus, the order of $2$ modulo $p^{r+j}$ is $e p^{j}$ for every integer $j \ge 0$.

Since $e$ divides $p-1$ and $p-1$ divides $m$ but $p \nmid m$, we deduce that 
\[
p^r \mid (2^m-1), \quad
p^{r+1} \nmid (2^m-1), \quad
p^{r+1} \mid (2^{mp}-1), \quad \text{and} \quad
p^{r+2} \nmid (2^{mp}-1).
\]
We consider two possibilities depending on whether $r = 1$ or $r > 1$.

For the first possibility, where $r = 1$, in \eqref{rhoequality}, we have
$B_{mp}$ has a denominator exactly divisible by $p$, 
$p$ also exactly divides the denominator $mp$ since $p \nmid m$, and the expression $2^{mp}(2^{mp}-1)$ is exactly divisible by $p^{r+1} = p^{2}$.  Thus, \eqref{rhoequality} implies that $\rho(mp/2,1)$ is not divisible by $p$,
and \eqref{prelimcongpartc} and \eqref{ebarmodpsquared} in turn imply that $E_{mp}^{(mp)}(x)$ is in Eisenstein form with respect to $p$.  Furthermore, $p \nmid m$, $p^r = p$ exactly divides $2^m-1$, and $p$ exactly divides the denominator of $B_m$ so that $p$ does not divide $m (2^m-1) B_m$.  

Now, consider the possibility that $r > 1$.  Then $B_{mp}$ has a denominator exactly divisible by $p$ and the expression $2^{mp}(2^{mp}-1)$ is divisible by $p^{r+1}$ and hence $p^3$.  We deduce from \eqref{rhoequality} that $\rho(mp/2,1)$ is divisible by $p$, so that \eqref{prelimcongpartc} and \eqref{ebarmodpsquared} imply that $E_{mp}^{(mp)}(x)$ is not in Eisenstein form with respect to $p$.  Here, $2^m-1$ is divisible by $p^2$
and $p$ exactly divides the denominator of $B_m$ so that $p$ divides $m (2^m-1) B_m$. 

Combining the above, we see that in the case $p-1$ divides $m$, we have that $E_{mp}^{(mp)}(x)$ is in Eisenstein form with respect to $p$ if and only if $p \nmid m (2^m-1) B_m$.


\section{Proof of Theorem~\ref{positivepropthm}}

In this section, we justify Theorem~\ref{positivepropthm}.  We begin with the following, which is contained in the argument for Lemma~2 in \cite{adelfil}.  

\begin{lemma}\label{thm2lemma1}
The inequality $|2(2^{m}-1) B_{m}| \le m^{m}$ holds for every $m \in \mathbb Z^{+}$.
\end{lemma}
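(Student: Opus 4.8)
The plan is to dispose of the trivial cases and then control the genuinely nontrivial even case using the classical evaluation of Bernoulli numbers in terms of the Riemann zeta function. First I would observe that $B_m = 0$ whenever $m$ is odd and $m \ge 3$, so for those $m$ the asserted inequality reads $0 \le m^m$ and holds trivially. For $m = 1$ one computes directly that $B_1 = -1/2$, so $|2(2^1-1)B_1| = 1 = 1^1$ and the inequality holds with equality. Hence everything reduces to the case of even $m \ge 2$, where all the content resides.

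For even $m$ I would invoke Euler's formula
\[
|B_m| = \frac{2\,m!}{(2\pi)^m}\,\zeta(m),
\]
valid for every even positive integer $m$, with $\zeta$ the Riemann zeta function. Substituting this into the quantity of interest gives
\[
\big|2(2^m-1)B_m\big| = \frac{4\,(2^m-1)\,m!\,\zeta(m)}{(2\pi)^m}.
\]
The governing idea is that each factor appearing here can be replaced by a crude upper bound while still leaving enough room to land strictly below $m^m$.

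To finish, I would apply three elementary estimates: $2^m - 1 < 2^m$; $\zeta(m) \le \zeta(2) = \pi^2/6 < 2$, which is legitimate since $\zeta$ is decreasing on $(1,\infty)$; and $m! \le m^m$. Chaining these yields
\[
\big|2(2^m-1)B_m\big| < \frac{4\cdot 2^m \cdot m! \cdot 2}{(2\pi)^m} = \frac{8\,m!}{\pi^m} \le \frac{8\,m^m}{\pi^m},
\]
and since $m \ge 2$ we have $\pi^m \ge \pi^2 > 8$, so the right-hand side is strictly less than $m^m$, completing the argument. There is no deep obstacle here; the single point demanding care — the one place a careless version could break — is that the accumulated constant $8$ coming from $2^m - 1 < 2^m$ and $\zeta(m) < 2$ must already be dominated by $\pi^m$ at the smallest relevant exponent $m = 2$. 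Because $\pi^2 = 9.87\ldots$, there is just enough slack at $m = 2$, and the estimate becomes comfortably strict for all larger $m$.
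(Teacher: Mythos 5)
Your proof is correct and follows essentially the same route as the paper: dispose of $m=1$ (and odd $m$) directly, then for even $m$ use Euler's formula $|B_m| = 2\,m!\,\zeta(m)/(2\pi)^m$ together with $\zeta(m) \le \zeta(2) < 2$ to bound $|2(2^m-1)B_m|$ by $8\,m!/\pi^m < m^m$. Your version merely spells out the final step ($m! \le m^m$ and $\pi^2 > 8$) that the paper leaves implicit.
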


\begin{proof}
We use that $B_{1} = -1/2$, and for $k \ge 1$, we have $B_{2k+1} = 0$  and 
\[
B_{2k} = (-1)^{k-1} \dfrac{2 (2k)!}{(2\pi)^{2k}} \zeta(2k)
\]
(cf.~\cite{borshaf}).  For $m = 1$, we have $|2(2^{m}-1) B_{m}| = 1 = m^{m}$, so the stated inequality holds.  
For $m = 2k \ge 2$, we have $|\zeta(2k)| \le |\zeta(2)| < 2$ so that 
\[
|2(2^{m}-1) B_{m}| \le 2(2^{m}-1)  \dfrac{4 (m!)}{(2\pi)^{m}} < \dfrac{8(m!)}{\pi^{m}} < m^{m},
\]
completing the proof.
\end{proof}

The basic idea is to consider the positive integers $n = mp \le t$ where $m \in \mathbb Z^{+}$ and $p > m$ is a prime.  
For such $n$, Theorem~\ref{eisensteinpolynsthm} implies that if $p$ does not divide the integer $2 (2^{m}-1) B_{m}$, 
then $E_{mp}^{(mp)}(x)$ is in Eisenstein form with respect to $p$.  We show that most such $n$ (asymptotically almost all) are such that $E_{n}^{(n)}(x)$ is in Eisenstein form with respect to the largest prime divisor $p$ of $n$.  

Fix $\varepsilon \in (0,1/2)$. 
Let $\mathcal S = \mathcal S(\varepsilon,t)$ be the set of pairs $(m,p)$ with 
\begin{gather*}
m \in \mathbb Z^{+}, \quad m \le t^{(1/2)-\varepsilon}, \quad m \text{ even}, \\
p > m \text{ a prime}, \quad mp \le t, \quad \text{ and } \quad p \nmid 2 (2^{m}-1) B_{m}.
\end{gather*}
By Theorem~\ref{eisensteinpolynsthm}, the pairs $(m,p) \in \mathcal S$ correspond to unique positive integers $n = mp \le t$ for which $E_{n}^{(n)}(x)$ is in Eisenstein form.  
Our interest is in counting the number of pairs in $\mathcal S$.
 
We make use of the notation $\pi(x)$ for the number of primes $\le x$ and $f(t) = (1 + o(1)) g(t)$ to indicate that
for every fixed $\varepsilon' > 0$ and $t \ge t_{0}(\varepsilon')$ sufficiently large, we have $(1-\varepsilon') g(t) < |f(t)| < (1+\varepsilon') g(t)$.  
For a fixed $m \in \mathbb Z^{+}$ with $m \le t^{(1/2)-\varepsilon}$ and $m$ even, 
there are $\pi(t/m) - \pi(m)$ different primes $p > m$ for which $mp \le t$.  
By Lemma~\ref{thm2lemma1}, there are $< m$ primes $p > m$ which divide $2(2^{m}-1) B_{m}$.  
From the Prime Number Theorem, the number of pairs $(m,p) \in \mathcal S$, with $m$ still fixed, is at least
\begin{align*}
\pi\bigg(  \dfrac{t}{m}  \bigg) - \pi(m) - m 
&\ge (1+o(1)) \dfrac{t}{m \log (t/m)} - 2 t^{(1/2)-\varepsilon} \\[5pt] 
&= (1+o(1)) \dfrac{t}{m \log (t/m)},
\end{align*}
where the equality follows from
\[
\dfrac{t}{m \log (t/m)} 
\ge \dfrac{t}{t^{(1/2)-\varepsilon} \log t}
= \dfrac{t^{(1/2)+\varepsilon}}{\log t}.
\]
The above only depends on $t/m \ge t^{(1/2)+\varepsilon}$ being sufficiently large compared to $t^{(1/2)-\varepsilon}$, and in particular the $o(1)$ notation is uniform in $m \le t^{(1/2)-\varepsilon}$.  
We obtain that 
\[
|\mathcal S| \ge (1+o(1)) \sum_{\substack{m \le t^{(1/2)-\varepsilon} \\ m \text{ even}}} \dfrac{t}{m \log(t/m)}
=  (1+o(1)) \log\bigg( \dfrac{2}{1+2\varepsilon} \bigg) \dfrac{t}{2},
\]
where the latter can be deduced from a comparison of the sum to the integral
\[
\int_{1}^{z} \dfrac{t}{(2x) \log(t/(2x))} \,dx
= -\dfrac{t \log\log(t/(2x))}{2} \bigg|_{1}^{z}
\]
where $z = (1/2) \,t^{(1/2)-\varepsilon}$.  
As the above holds for each $\varepsilon \in (0,1/2)$, Theorem~\ref{positivepropthm} follows.

\vskip 8pt \noindent
\textbf{Acknowledgment:}  The authors thank Pieter Moree for pointing out connections of our work to \cite{moree1,moree2}, leading to Theorem~\ref{newthm}.


\end{document}